\documentclass[12pt]{amsart}
\usepackage{amsmath,amssymb,amsfonts,amsthm,amsopn}
\usepackage{graphicx,tikz}
\usepackage[all]{xy}
\usepackage{amsmath}
\usepackage{multirow, longtable, makecell, caption, array,enumitem}
\usepackage{amssymb}
\usepackage{mathtools}
\usepackage{bookmark}
\usepackage{hyperref}

\setlength{\textwidth}{\paperwidth}
\addtolength{\textwidth}{-2.5in}
\calclayout

\newtheorem{theorem}{Theorem}[section]

\newtheorem*{conj}{Conjecture}
\newtheorem{quest}{Question}

\newtheorem{remark}[theorem]{Remark}

\theoremstyle{definition}

\usepackage{cellspace} %
\setlength\cellspacetoplimit{5pt}
\setlength\cellspacebottomlimit{5pt}

\setcounter{MaxMatrixCols}{10}

\def\bR{\mathbb{R}}
\def\bC{\mathbb{C}}

\def\cF{\mathcal{F}}
\def\cS{\mathcal{S}}

\def\rd{\bR^d}

\def\rdd{\bR^{2d}}

\def\la{\langle}
\def\ra{\rangle}

\def\*b{*_{\bullet}}

\def\S0{S^0_{0,0}}

\def\Bd'{B_{\delta'}}

\def\cBd'{\bar{B}_{\delta'}}

\def\o{\omega}

\begin{document}
	
\title[The HRT conjecture and a new uncertainty principle] {A note on the HRT conjecture and a new uncertainty principle for the short-time Fourier transform}
\author{Fabio Nicola}
\address{Dipartimento di Scienze Matematiche, Politecnico di Torino, corso Duca degli Abruzzi 24, 10129 Torino, Italy}
\email{fabio.nicola@polito.it}
\author{S. Ivan Trapasso}
\address{Dipartimento di Scienze Matematiche, Politecnico di Torino, corso Duca degli Abruzzi 24, 10129 Torino, Italy}
\email{salvatore.trapasso@polito.it}
\subjclass[2010]{42B10, 42C15, 42C30}
\keywords{Short-time Fourier transform, uncertainty principle, HRT conjecture.}
\date{\today} 

\begin{abstract}
In this note we provide a negative answer to a question raised by M. Kreisel concerning a condition on the short-time Fourier transform that would imply the HRT conjecture. In particular we provide a new type of uncertainty principle for the short-time Fourier transform which forbids the arrangement of an arbitrary ``bump with fat tail'' profile.
\end{abstract}
\maketitle
\section{Introduction}
A famous open problem in Gabor analysis is the so-called \textit{HRT conjecture}, concerning the linear independence of finitely many time-frequency shifts of a non-trivial square-integrable function \cite{hrt}. To be precise, for $x,\o \in \rd$ consider the translation and modulation operators acting on $f \in L^2(\rd)$:
\[ T_x f(t) = f(t-x), \quad M_{\o}f(t) = e^{2\pi i t\cdot \o}f(t). \] For $z=(x,\o)\in \rdd$ we say that $\pi(z)f = M_\o T_x f$ is a time-frequency shift of $f$ along $z$. The HRT conjecture can thus be stated as follows:
\begin{conj} Given $g \in L^2(\rd)\setminus\{0\}$ and a set $\Lambda$ of finitely many distinct points $z_1,\ldots,z_N \in \rdd$, the set $G(g,\Lambda)= \{\pi(z_k)g \}_{k=1}^N$ is a linearly independent set of functions in $L^2(\rd)$.
\end{conj}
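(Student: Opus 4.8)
The plan is, frankly, a strategy rather than a resolution: the HRT conjecture is a long-standing open problem, and what I would attempt is the chain of reductions that organizes all known partial progress, together with an identification of the point at which every such attack currently stalls. First I would exploit the metaplectic representation to normalize the configuration $\Lambda$. For each $\hat{A}$ in the metaplectic group lying over a symplectic $A \in \mathrm{Sp}(2d,\bR)$, one has the covariance relation $\hat{A}\pi(z)\hat{A}^{-1} = c\,\pi(Az)$ with $|c|=1$, and conjugation by the unitary $\hat{A}$ preserves linear independence. Hence I may replace $\Lambda$ by $A\Lambda$ for any symplectic $A$. Using the composition law $\pi(z_k)\pi(-z_1) = c_k\,\pi(z_k-z_1)$ I may also factor out $\pi(z_1)$ and assume $z_1=0$, so it suffices to prove independence of $g,\pi(z_2)g,\ldots,\pi(z_N)g$ with $z_2,\ldots,z_N$ in a convenient normal form.

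Next I would dispose of the small and structured cases. For $N=2$ a nontrivial relation reads $g = c\,\pi(z)g$ with $z=(a,b)\neq 0$: if $a\neq 0$ then $|g|$ is periodic with period $a$, contradicting $g\in L^2(\rd)$; if $a=0$ but $b\neq 0$ then $c\,e^{2\pi i t\cdot b}=1$ on the support of $g$, impossible on a set of positive measure. For $N=3$, the symplectic normalization reduces the configuration to one amenable to direct analysis. When $\Lambda$ lies in a translate of a full-rank lattice the statement is known through von Neumann algebra techniques: the relation is realized inside the group von Neumann algebra of the Heisenberg group, and faithfulness of the canonical trace forces the coefficients to vanish. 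I would likewise treat separately the case in which $g$ has compact support or sufficiently fast decay, where the tail behaviour of the short-time Fourier transform $V_g g$ gives direct control.

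The hard part, and the reason the conjecture remains open, is the genuinely generic regime: four or more points in irrational, non-lattice position with $g$ merely square-integrable and otherwise unconstrained. None of the reductions above close the argument here. The trace method collapses because the differences $z_j-z_k$ generate a dense rather than discrete subgroup, and the decay arguments fail because membership in $L^2(\rd)$ alone permits extremely slow decay together with wild oscillation. The essential quantity is the short-time Fourier transform evaluated on the difference set, since $\langle \pi(z_j)g,\pi(z_k)g\rangle = c_{jk}\,V_g g(z_j-z_k)$; a hypothetical dependence relation is thus a constraint on how the mass of $V_g g$ can be arranged along $\{z_j-z_k\}$.

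My expectation is therefore that any complete attack must couple a quantitative uncertainty principle for $V_g g$—one strong enough to preclude a prescribed distribution of mass along the difference set—with a recurrence or ergodic mechanism that propagates the relation to infinitely many shifts and extracts a contradiction with $\|g\|_{L^2}<\infty$. It is precisely the search for such a sufficient uncertainty-type condition, and the demonstration that one natural candidate of this kind in fact fails, that the remainder of this note takes up.
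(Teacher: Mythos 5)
You have not proved the statement, and you say so yourself --- but it is important to be clear that nobody has: the statement is the HRT conjecture, which is an open problem, and the paper does not prove it either. The paper states it only as a conjecture; its actual theorems (Theorems \ref{maint} and \ref{ft ball}) go in the opposite direction, showing that a proposed \emph{route} to the conjecture is blocked. Specifically, Kreisel had shown that if for every $f$ one could design a window $g$ realizing the ``bump with fat tail'' profile $|V_g f(z)| < |\la f,g\ra|/N$ for $|z|>R$, the conjecture would follow; the paper proves, via the maximum principle applied to $\Phi f$ (equivalently, to the Bargmann transform of $f$), that no such arrangement is possible once $R$ is below an explicit threshold $\sqrt{\log N/\pi}$ (cylinder) or $\sqrt{2\log N/\pi}$ (sphere). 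So there is no proof in the paper against which your attempt could be judged correct or incorrect; any purported complete proof of the statement would, at present, be wrong or at least unverified.

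Within that caveat, your survey of the reduction machinery is essentially accurate and matches the literature the paper cites: metaplectic/symplectic normalization of $\Lambda$, the elementary disposal of $N=2$ (periodicity of $|g|$ versus $g\in L^2(\rd)$, and the positive-measure argument when the translation component vanishes), the lattice case via Linnell's von Neumann algebra argument, and the identification of $V_g g$ evaluated on the difference set $\{z_j-z_k\}$ as the key quantity. One small slip: to factor out $\pi(z_1)$ you should apply the fixed unitary $\pi(z_1)^{-1}$ on the \emph{left} of each $\pi(z_k)g$, i.e.\ use $\pi(-z_1)\pi(z_k) = c_k\,\pi(z_k-z_1)$, since linear independence is preserved under a common left multiplication by a unitary; as written, $\pi(z_k)\pi(-z_1)$ is not an operation applied to the functions $\pi(z_k)g$. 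Your closing paragraph --- that one should seek a quantitative uncertainty principle for the STFT strong enough to constrain mass along the difference set, and that the paper's role is to show a natural candidate of this kind fails --- is exactly the correct reading of what this note accomplishes. In short: your proposal is an honest and largely correct description of the state of the art, but it is a research program, not a proof, and the statement itself remains open.
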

As of today this somewhat basic question is still unanswered. Nevertheless, the conjecture has been proved for certain classes of functions or for special arrangements of points. We address the reader to the surveys \cite{heil speegle,heil survey}, \cite[Section 11.9]{heil book} and the paper \cite{okoudjou} for a detailed and updated state of the art on the issue. As a general remark we mention that the difficulty of the problem is witnessed by the variety of techniques involved in the known partial results, and also the surprising gap between the latter and the contexts for which nothing is known. For example, a celebrated result by Linnell \cite{linnell} states that the conjecture is true for arbitrary $g \in L^2(\rd)$ and for $\Lambda$ being a finite subset of a full-rank lattice in $\rdd$ and the proof is based on von Neumann algebras arguments. In spite of the wide range of this partial result, a solution is still lacking for smooth functions with fast decay (e.g., $g \in \cS(\bR^d)$) or for general configurations of just four points. The problem is further complicated by numerical evidence in conflict with analytic conclusions \cite{gro hrt}. 

A recent contribution by Kreisel \cite{kreisel} proves the HRT conjecture under the assumption that the distance between points in $\Lambda$ is large compared to the decay of $g$. The class of functions $g$ which are best suited for this perspective include functions with sharp descent near the origin or having a singularity away from which $g$ is bounded. 

Kreisel's paper ends with a question on the short-time Fourier transform (STFT). Recall that this is defined as \[ V_g f(x,\o) = \la f,\pi(z)g \ra = \int_{\rd} e^{-2\pi i t \cdot \o} f(t)\overline{g(t-x)}dt, \quad z=(x,\o)\in \rdd, \] for given $f,g \in L^2(\rd)$, where $\la \cdot,\cdot \ra$ denotes the inner product on $L^2(\rd)$. The STFT plays a central role in modern time-frequency analysis \cite{gro book}.
\begin{quest}\label{quest ft}
	Given $f \in L^2(\rd)$ and $R,N>0$, is there a way to design a window $g \in L^2(\rd)$ such that the ``bump with fat tail'' condition
	\begin{equation}\label{fat tail sph}
	|V_g f(z)| < \frac{|\la f,g \ra|}{N}, \quad |z|>R,
	\end{equation} holds? 
\end{quest}

From a heuristic point of view this would amount to determine a window $g$ such that $V_gf$ shows a bump near the origin and a mild decay at infinity; that is, the energy of the signal accumulates a little near the origin and then spreads on the tail (hence a \textit{fat tail}). This balance is unavoidable in view of the uncertainty principle, which forbids an arbitrary accumulation near the origin. 

A positive answer to Question \ref{quest ft} would prove the HRT conjecture by \cite[Theorem 3]{kreisel}. In fact we prove that the answer is negative as a consequence of the following result, which can be interpreted as a form of the uncertainty principle for the STFT \cite{bonami,fernandez,gro up,lieb}.

\begin{theorem}\label{maint}
	Let $g(t) = e^{-\pi t^2}$ and assume that there exist $R >0$, $N>1$ and $f \in L^2(\rd)\setminus\{0\}$ such that 
	\begin{equation}\label{fat tail cyl}
	|V_g f(x,\o)| \le \frac{|\la f,g \ra|}{N}, \quad |\o|=R.
	\end{equation}
	Then
	\begin{equation}\label{R est cyl} R > \sqrt{\frac{\log N}{\pi}}. \end{equation}
\end{theorem}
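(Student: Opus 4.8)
The plan is to exploit the fact that, for the Gaussian window, the STFT is (up to a Gaussian weight) an entire function, and then to run a one–variable maximum–principle argument on a horizontal strip. First I would make the analyticity explicit: expanding $|t-x|^2=|t|^2-2t\cdot x+|x|^2$ in the definition of the STFT and using that $g$ is real, one gets
\[
V_g f(x,\o)=e^{-\pi|x|^2}\,G(x-i\o),\qquad G(z)=\int_{\rd}f(t)\,e^{-\pi|t|^2}\,e^{2\pi t\cdot z}\,dt,
\]
where $G$ is entire on $\cd$ (this is, up to normalization, the Bargmann transform of $f$). Two elementary facts will drive everything: $G(0)=\la f,g\ra$, and Cauchy--Schwarz gives $|V_g f|\le \|f\|_2\|g\|_2$, hence the pointwise bound $|G(z)|\le \|f\|_2\|g\|_2\,e^{\pi|x|^2}$.

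Next I would reduce to one complex variable by slicing along a unit vector $e\in\rd$. Setting $G_e(\zeta)=G(\zeta e)$ with $\zeta=s-i\tau\in\bC$, one has $V_g f(se,\tau e)=e^{-\pi s^2}G_e(\zeta)$, and the cylinder $|\o|=R$ corresponds exactly to $\mathrm{Im}\,\zeta=\pm R$. The key trick is to absorb the weight $e^{-\pi s^2}$ into a Gaussian multiplier: put $H_e(\zeta)=G_e(\zeta)\,e^{-\pi\zeta^2}$. Since $\mathrm{Re}(\zeta^2)=s^2-\tau^2$, a direct computation gives the clean identity $|H_e(\zeta)|=e^{\pi\tau^2}\,|V_g f(se,\tau e)|$. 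On the two boundary lines $|\tau|=R$ the weight collapses to the constant $e^{\pi R^2}$, so hypothesis \eqref{fat tail cyl} yields $|H_e(\zeta)|\le e^{\pi R^2}|\la f,g\ra|/N$ there; meanwhile the pointwise bound gives $|H_e(\zeta)|\le e^{\pi R^2}\|f\|_2\|g\|_2$ throughout the closed strip $|\tau|\le R$, so $H_e$ is bounded and holomorphic on that strip. Finally $H_e(0)=G(0)=\la f,g\ra$.

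At this point the Hadamard three-lines theorem (Phragmén--Lindel\"of on the strip, legitimate precisely because $H_e$ is bounded there) gives $|H_e(0)|\le \sup_{|\tau|=R}|H_e|\le e^{\pi R^2}|\la f,g\ra|/N$, and if $H_e$ is non-constant the strong maximum principle upgrades the first inequality to a strict one; dividing by $\la f,g\ra\neq 0$ produces $N<e^{\pi R^2}$, which is exactly \eqref{R est cyl}. The main obstacle is therefore not the analytic machinery but securing the two genericity facts that make the last line legitimate: that $\la f,g\ra\neq0$, and that $e$ can be chosen so that $H_e$ is non-constant. I expect both to follow from $f\neq0$ by an identity-theorem argument. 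Indeed, if $\la f,g\ra=0$ then \eqref{fat tail cyl} forces $V_g f\equiv0$ on the cylinder, so each $G_e$ vanishes on a whole line and hence $G_e\equiv0$ for every $e$, giving $G\equiv0$ and $f=0$; and if $H_e$ were constant for \emph{every} $e$, then $G$ would agree with $\la f,g\ra\,e^{\pi(z_1^2+\cdots+z_d^2)}$ on every complex line through the origin, hence on all of $\cd$, which forces $f$ to be a nonzero constant --- impossible in $L^2(\rd)$. Thus a suitable $e$ exists and the argument closes.
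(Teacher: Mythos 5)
Your proof is correct, and although it rests on the same basic mechanism as the paper's --- for the Gaussian window the STFT is an entire function times a Gaussian weight, so a maximum-principle argument applies --- the technical route is genuinely different. In fact your $H_e(\zeta)=G_e(\zeta)e^{-\pi\zeta^2}$ is precisely the restriction to the complex line $\bC e$ of the paper's function $\Phi f(z)=\int_{\rd}e^{-\pi(t-z)^2}f(t)\,dt$, but the two arguments then diverge. The paper stays in $\bC^d$ and applies the maximum principle on the compact boxes $Q_{a,R}=\{|x|\le a,\ |\o|\le R\}$; to force the maximum onto the part of the boundary where $|\o|=R$ it invokes the fact that $V_gf$ vanishes at infinity, and the same decay turns ``$\Phi f$ constant'' directly into ``$\Phi f\equiv 0$'', so the degenerate situations ($\la f,g\ra=0$, or constancy) never need separate treatment: assuming $R\le\sqrt{\log N/\pi}$ leads at once to $f=0$. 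You instead slice to one complex variable and use Phragm\'en--Lindel\"of (three lines) on the unbounded strip, for which the only input is the trivial Cauchy--Schwarz bound $|V_gf|\le\|f\|_2\|g\|_2$ rather than vanishing at infinity; the price is the two side lemmas (that $\la f,g\ra\ne 0$, and that some direction $e$ gives a non-constant $H_e$), which you correctly close via the identity theorem, and which also supply the strictness of \eqref{R est cyl} through the strong maximum principle. So your version trades the several-variables maximum principle plus a decay result for the STFT against one-variable complex analysis plus an elementary $L^\infty$ bound and a short case analysis --- a fair trade; the paper's version is more uniform (no cases) and is the one that carries over essentially verbatim to the sphere case of Theorem \ref{ft ball}. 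One small point you should make explicit: for $d>1$ the union of the complex lines $\bC e$ over real unit vectors $e$ is \emph{not} all of $\bC^d$ (it consists only of points whose real and imaginary parts are parallel), so the steps ``$G_e\equiv0$ for every $e$ implies $G\equiv 0$'' and ``agreement on every such line implies agreement on all of $\cd$'' both require the standard fact that an entire function on $\bC^d$ vanishing on the totally real subspace $\rd$ (which these lines do cover) vanishes identically; with that one-line justification inserted, your argument is complete.
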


This result is indeed a negative answer to Question \ref{quest ft} since $|V_gf (x,\o)| = |V_f g(-x,-\o)|$. In fact, a stronger result can be proved in the case where the cylinder in \eqref{fat tail cyl} is replaced by a sphere.

\begin{theorem}\label{ft ball}
	Let $g(t) = e^{-\pi t^2}$ and assume that there exists $R>0$, $N>1$ and $f \in L^2(\rd)\setminus\{0\}$ such that 
	\begin{equation}\label{fat tail ball}
	|V_g f(z)| \le \frac{|\la f,g \ra|}{N}, \quad |z|=R.
	\end{equation}
	Then
	\begin{equation}\label{R est ball} R \ge \sqrt{\frac{2\log N}{\pi}}. \end{equation}
	Moreover, \eqref{fat tail ball} holds with $R=\sqrt{2\log  N / \pi}$ if and only if $f(t)= ce^{-\pi t^2}$ for some $c \in \bC\setminus\{0\}$. 
\end{theorem}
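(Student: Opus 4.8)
The plan is to exploit the special structure of the STFT with a Gaussian window through its link with the Bargmann transform and one-line facts about entire functions. Writing $z=(x,\o)\in\rdd$ and identifying it with $\zeta=x+i\o\in\cd$ (so that $|z|=|\zeta|$), I would first recall the classical STFT--Bargmann identity (see e.g. Gröchenig's book): up to a unimodular phase and an inessential constant, it reads
\[
|V_g f(x,\o)| = e^{-\pi|\zeta|^2/2}\,|F(\zeta)|,
\]
where $F=Bf$ is an \emph{entire} function on $\cd$. Evaluating at $z=0$ gives $|F(0)|=|\la f,g\ra|$, so the hypothesis \eqref{fat tail ball} is exactly the pointwise bound $|F(\zeta)|\le \tfrac{|F(0)|}{N}\,e^{\pi R^2/2}$ on the sphere $|\zeta|=R$.

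With this reformulation the inequality \eqref{R est ball} becomes essentially the maximum modulus principle. First I would dispose of the degenerate case $\la f,g\ra=0$: then $F$ vanishes on the whole sphere, and restricting $F$ to each complex line $t\mapsto F(tv)$ (with $|v|=1$, $t\in\bC$) yields an entire function of one variable vanishing on $|t|=R$, hence identically zero; thus $F\equiv 0$ and $f=0$, against the assumption. So $F(0)\neq 0$. Now the maximum modulus principle on the closed ball $\{|\zeta|\le R\}$ gives $|F(0)|\le\max_{|\zeta|=R}|F(\zeta)|$, and combining this with the bound above produces $|F(0)|\le\tfrac{|F(0)|}{N}\,e^{\pi R^2/2}$. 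Dividing by $|F(0)|\neq 0$ yields $N\le e^{\pi R^2/2}$, which is precisely \eqref{R est ball}.

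For the equality discussion, suppose $R=\sqrt{2\log N/\pi}$, so that $N=e^{\pi R^2/2}$ and the chain of inequalities collapses to equalities; in particular $\max_{|\zeta|\le R}|F|=|F(0)|$, so $|F|$ attains its maximum over the ball at the interior point $0$. The maximum modulus principle then forces $F$ to be constant, and since the Bargmann transform is injective with $Bg$ a nonzero constant, this means $f$ is a scalar multiple of $g$, i.e. $f(t)=ce^{-\pi t^2}$. Conversely, for $f=cg$ a direct computation of the Gaussian STFT gives $|V_g f(z)|=|c|\,\|g\|_2^2\,e^{-\pi|z|^2/2}$ and $|\la f,g\ra|=|c|\,\|g\|_2^2$, so on $|z|=\sqrt{2\log N/\pi}$ the ratio equals exactly $1/N$ and \eqref{fat tail ball} holds (with equality).

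The genuinely delicate point, I expect, is pinning down the correct normalization in the STFT--Bargmann identity, so that $|F(0)|=|\la f,g\ra|$ holds exactly and the Gaussian weight carries the precise exponent $\pi|\zeta|^2/2$ on which the sharp constant in \eqref{R est ball} depends; everything downstream is then a clean application of the maximum modulus principle, which conveniently delivers both the inequality and its equality characterization at once. A secondary item to verify carefully is the several-variables form of the uniqueness step in the degenerate case, which the reduction to complex lines handles.
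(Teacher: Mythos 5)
Your proposal is correct and follows essentially the same route as the paper's proof: both pass to the Bargmann transform (with the weight $e^{-\pi|\zeta|^2/2}$ and the normalization fixed so that the value at $0$ is $\la f,g\ra$), apply the maximum modulus principle on the ball $|\zeta|\le R$ to obtain $N\le e^{\pi R^2/2}$, and settle the equality case by forcing $\mathcal{B}f$ to be constant and invoking the injectivity of the Bargmann transform together with the Gaussian computation. The only cosmetic differences are that the paper absorbs your separate degenerate case $\la f,g\ra=0$ into one stroke (if $\sup_{|\zeta|\le R}|\mathcal{B}f|=0$ then $f=0$ by analytic continuation) rather than restricting to complex lines, and that it leaves the converse computation for $f=cg$ implicit.
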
 

\section{Proof of the main results and remarks}
\begin{proof}[Proof of Theorem \ref{maint}] An explicit computation shows that
	\[ |V_g f(x,-\o)| = \left| \int_{\rd} e^{2\pi i t \cdot \o}e^{-\pi(t-x)^2}f(t)dt \right| = e^{-\pi\o^2}|\Phi f(z)|, \]
	where we set 
	\[ \Phi f(z) = \int_{\rd} e^{-\pi(t-z)^2}f(t)dt, \quad z=x+i\o \in \bC^d. \]
	Notice that $\Phi f$ is an entire function on $\bC^d$, since differentiation under the integral sign is allowed. Define 
	\[ M_{a,R} = \sup_{z \in Q_{a,R}} |\Phi f(z)|, \quad Q_{a,R}=\{z=x+i\o \in \bC^d : |x|\le a, |\o| \le R \}, \] where $a>0$ will be fixed in a moment. The maximum principle \cite{narasi} implies that $|\Phi f|$ takes the value $M_{a,R}$ at some point of the boundary of $Q_{a,R}$. Since $f,g\in L^2(\mathbb{R}^d)$, $V_gf$ vanishes at infinity (e.g.\ \cite[Corollary 3.10]{ct}), so that $V_g f(x,-\o) \to 0$ for $|x|\to + \infty$, uniformly with respect to $\o\in \mathbb{R}^d$. Therefore $\Phi f(x+i\o) \to 0$ for $|x| \to +\infty$, uniformly with respect to $\o$ over compact subsets of $\rd$. This shows that for sufficiently large $a>0$ we have $|\Phi f(z_0)|=M_{a,R}$ for some point $z_0=(x_0,\o_0)$ with $|\o_0|= R$. 
	
	In view of assumption \eqref{fat tail cyl} the following estimate holds:
	\[ M_{a,R} e^{-\pi R^2} = |V_gf(z_0)| \le \frac{|\Phi f(0)|}{N}, \] where we used the identity $\la f,g\ra = V_gf(0) = \Phi f(0)$; therefore 
	\[ M_{a,R} \le \frac{e^{\pi R^2}}{N}|\Phi f(0)|. \] 
	Assume now that $R \le \sqrt{\log N / \pi}$; this would imply $M_{a,R} \le |\Phi f(0)|$ and thus $\Phi f$ would be constant on $Q_{a,R}$, hence on $\bC^d$ by analytic continuation \cite{narasi}. Since $\Phi f(x+i\o) \to 0$ for $|x| \to +\infty$ as already showed above, we could conclude that $\Phi f \equiv 0$, hence $V_gf \equiv 0$ and then $f\equiv 0$, which is a contradiction. 
\end{proof}

\begin{remark}
	Notice that Theorem \ref{maint} still holds in the case where the cylinder in \eqref{fat tail cyl} is replaced by any other cylinder obtained from the previous one by a symplectic rotation (cf.\ \cite[Sec. 2.3.2]{dg}). Indeed, if $\widehat{S}$ denotes a metaplectic operator \cite{dg} corresponding to $S \in \mathrm{Sp}(d,\bR) \cap \mathrm{O}(2d,\bR)$, condition \eqref{fat tail cyl} with $z=(x,\omega)$ replaced by $S^{-1}z$ is equivalent to
	\[ |V_g(\widehat{S}f)(x,\o)| \le \frac{|\la \widehat{S}f,g \ra|}{N}. \]
	This can be easily seen by using the covariance property \cite[Lemma 9.4.3]{gro book}
	\[
|V_g f(S^{-1}z)|=|V_{\widehat{S}g} \widehat{S}f(z)|,
	\]
the fact that $\widehat{S}$ is unitary on $L^2(\rd)$ and  that $\widehat{S}g = cg$ for some $c \in \bC$, $|c|=1$, if $g(t)=e^{-\pi t^2}$ \cite[Prop. 252]{dg}.
\end{remark}

\begin{remark} The estimate for $R$ in \eqref{R est cyl} is sharp. Consider indeed a dilated Gaussian function $f_\lambda(t) = e^{-\pi\lambda^2 t^2}$, $0 < \lambda \le 1$; a straightforward computation (see for instance \cite[Lemma 3.1]{cn}) shows that 
	\[ V_g f_\lambda (x,\o) = (1+\lambda^2)^{-d/2}e^{-2\pi i \frac{x\cdot \o}{1+\lambda^2}} e^{-\pi \frac{\lambda^2 x^2}{1+\lambda^2}} e^{-\pi \frac{\o^2}{1+\lambda^2}}. \] 
Condition \eqref{fat tail cyl} is thus satisfied if and only if 
\[ R \ge \sqrt{(1+\lambda^2) \frac{\log N}{\pi}}, \] and letting $\lambda \to 0^+$ yields the bound in \eqref{R est cyl}. 

It is worth emphasizing that there is no non-zero $f \in L^2(\rd)$ such that the optimal bound in \eqref{R est cyl} can be attained, in contrast to other uncertainty principles for the STFT. 
\end{remark} 

\begin{proof}[Proof of Theorem \ref{ft ball}]
Recall the connection between the STFT and the \textit{Bargmann transform} of a function $f \in L^2(\rd)$ \cite[Prop. 3.4.1]{gro book}:
\begin{equation}\label{barg stft} V_g f (x,-\o) = 2^{-d/4}e^{\pi i x\cdot \o} \mathcal{B}f(z) e^{-\pi |z|^2/2}, \quad z=x+i\o \in \bC^d, \end{equation} where the Bargmann transform is defined by
\[ \mathcal{B}f(z) = 2^{d/4} \int_{\rd} f(t) e^{2\pi t\cdot z - \pi t^2 - \pi z^2 /2}dt; \]
(here $g(t) = e^{-\pi t^2}$  as in the statement). This correspondence is indeed a unitary operator from $L^2(\rd)$ onto the \textit{Bargmann-Fock space} $\cF^2(\bC^d)$, i.e.\ the Hilbert space of all entire functions $F$ on $\bC^d$ such that $e^{-\pi |\cdot|^2/2}F \in L^2(\bC^d)$, cf.\ \cite[Sec. 3.4]{gro book} (see also \cite{toft1,toft2}). 

We now argue as in the proof of Theorem \ref{maint}. After setting 
\[ M_R = \sup_{z\in B_R(0)} |\mathcal{B}f(z)|, \quad B_R(0) = \{z \in \bC^d : |z|\le R \}, \] the maximum principle implies that $|\mathcal{B}f|$ takes the value $M_R$ on some point $z$ with $|z|=R$ and moreover $M_R>0$ (otherwise by analytic continuation we would have $\mathcal{B}f=0$ and therefore $f=0$).  Condition \eqref{fat tail ball} then implies \[ M_R \le \frac{e^{\pi R^2/2}}{N} |\mathcal{B}f(0)|. 
\]
If $R < \sqrt{2 \log N/\pi}$ we obtain $M_R < |\mathcal{B}f(0)|$, which is a contradiction. If $R = \sqrt{2 \log N/\pi}$ then $M_R = |\mathcal{B}f(0)|$ and therefore
 $\mathcal{B}f(z)= C$, $z \in \bC^d$, again by the maximum principle and analytic continuation, with $C\ne0$. On the other hand, a direct computation and the injectivity of the Bargmann transform show that $\mathcal{B}f(z)=1$ (hence $|V_g f (z)| = 2^{-d/4} e^{-\pi |z|^2/2}$) if and only if  $f(t)=  2^{d/4}e^{-\pi t^2}$.  This gives the last part of the claim.  
\end{proof}

\section*{Acknowledgments} The authors wish to thank Professor Elena Cordero for fruitful discussions. \\ The present research was partially supported by MIUR grant “Dipartimenti di Eccellenza” 2018–2022, CUP: E11G18000350001, DISMA, Politecnico di Torino.

\end{document}